
\documentclass[12pt]{article}
\usepackage{amsmath,amsthm,amsfonts,amssymb}
\usepackage[pdftex,pdfborder={0 0 0}]{hyperref}
\usepackage{fullpage}
\usepackage{multirow}
\usepackage{array}
\usepackage{bbm}
\usepackage[noblocks]{authblk}
\usepackage{verbatim}
\usepackage{tikz}
\usepackage{float}
\usepackage{enumerate}
\usepackage{diagbox}
\usepackage{graphicx}
\usepackage{colortbl}
\usepackage{hhline}
\usepackage[makeroom]{cancel}

\newtheorem{theorem}{Theorem}[section]
\newtheorem{lemma}[theorem]{Lemma}

\newtheorem{corollary}[theorem]{Corollary}
\newtheorem{conjecture}[theorem]{Conjecture}

\theoremstyle{definition}

\newlength{\Oldarrayrulewidth}

\newcommand{\N}{\mathbb{N}}

\newcommand{\Z}{\mathbb{Z}}

\usepackage{tikz}
\newcommand*\circled[1]{\tikz[baseline=(char.base)]{
\node[shape=circle,draw,inner sep=1pt] (char) {#1};}}

\addtolength{\textheight}{0in}

\begin{document}

\title{An Investigation on Partitions with Equal Products}

\author[1]{Byungchul Cha\thanks{byungchulcha@muhlenberg.edu}}
\affil[1]{Department of Mathematics, Muhlenberg College}

\author[1]{Adam Claman\thanks{acclaman@muhlenberg.edu}}

\author[2]{Joshua Harrington\thanks{joshua.harrington@cedarcrest.edu}}
\affil[2]{Department of Mathematics, Cedar Crest College}

\author[3]{Ziyu Liu\thanks{liu35z@mtholyoke.edu}}
\affil[3]{Department of Mathematics, Mount Holyoke College}

\author[4]{Barbara Maldonado\thanks{barbara.maldonado23230@gmail.com}}
\affil[4]{Department of Mathematics, University of Houston}

\author[5]{Alexander Miller\thanks{amill971@live.kutztown.edu}}
\affil[5]{Department of Mathematics, Kutztown University of Pennsylvania}

\author[1]{Ann Palma\thanks{annpalma@muhlenberg.edu}}

\author[5]{Tony W.\ H.\ Wong\thanks{wong@kutztown.edu}}

\author[6]{Hongkwon (Vin) Yi\thanks{321\_vin@berkeley.edu}}
\affil[6]{Department of Mathematics, University of California, Berkeley}

\date{\today}

\maketitle

\begin{abstract}
An ordered triple $(s,p,n)$ is called admissible if there exist two different multisets $X=\{x_1,x_2,\dotsc,x_n\}$ and $Y=\{y_1,y_2,\dotsc,y_n\}$ such that $X$ and $Y$ share the same sum $s$, the same product $p$, and the same size $n$. We first count the number of $n$ such that $(s,p,n)$ are admissible for a fixed $s$. We also fully characterize the values $p$ such that $(s,p,n)$ is admissible. Finally, we consider the situation where $r$ different multisets are needed, instead of just two. This project is also related to John Conway's wizard puzzle from the 1960s.\\
\textit{Keywords}: partitions; equal products.
\end{abstract}

\section{Introduction}
A multiset $X=\{x_1,x_2,\ldots,x_n\}$ of $n$ positive integers is an \emph{$n$-partition} of the sum $s=x_1+x_2+\cdots+x_n$.  Define the function $T\{x_1,x_2,\ldots,x_n\}=(s,p,n)$, where $p=x_1x_2\cdots x_n$.  Throughout this article, we will call $s$ and $p$ the sum and the product of the partition, respectively.  Our main focus will be on ordered triples $(s,p,n)$ for which there are at least two different $n$-partitions sharing the same sum $s$ and the same product $p$.  We call such ordered triples \emph{admissible}.  A positive integer $s$ is \emph{sum-admissible} if there exist integers $p$ and $n$ such that $(s,p,n)$ is admissible; similarly, a positive integer $p$ is \emph{product-admissible} if there exist integers $s$ and $n$ such that $(s,p,n)$ is admissible.  

For each integer $r\geq2$ and $n\geq3$, let $s_r(n)$ be the smallest positive integer, if it exists, such that for all integers $s\geq s_r(n)$, there are at least $r$ different $n$-partitions of $s$, namely $\{x_{i1},x_{i2},\dotsc,x_{in}\}$, where $i=1,2,\dotsc,r$, satisfying
\begin{enumerate}[\indent$(a)$]
\item\label{distinct} $x_{ij}\neq x_{i'j'}$ if $(i,j)\neq(i',j')$, and
\item\label{samespn} there exists $p\in\N$ such that for all $i=1,2,\dotsc,r$, $T\{x_{i1},x_{i2},\dotsc,x_{in}\}=(s,p,n)$.
\end{enumerate}
If condition $(\ref{distinct})$ is removed, then let $s_r^*(n)$ be the smallest positive integer, if it exists, such that for all integers $s\geq s_r^*(n)$, there are at least $r$ different $n$-partitions of $s$ satisfying only condition $(\ref{samespn})$. The following theorem is proved by John B. Kelly in 1964.

\begin{theorem}[$\cite{kelly1}$]\label{kelly}
For every integer $n\geq3$, $s_{n-1}(n)$ and $s_{n-1}^*(n)$ exist. Furthermore, $s_2(3)=23$ and $s_2^*(3)=19$.
\end{theorem}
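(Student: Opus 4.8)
The plan is to treat the two assertions separately: the existence of $s_{n-1}^*(n)$ and $s_{n-1}(n)$ for all $n\ge 3$, and the exact evaluation in the base case $n=3$. The engine for everything is a single scalable identity producing two $3$-partitions with a common sum and product. For integers $m\ge 2$ and $c\ge 2$ I would verify by direct expansion that
\[
\{1,\,(m+1)c-1,\,m^2c\}\quad\text{and}\quad\{m,\,c,\,m(m+1)c-m\}
\]
are distinct multisets, both with sum $(m^2+m+1)c$ and product $m^2c\bigl[(m+1)c-1\bigr]$. Taking $m=2$ gives the pair $\{1,3c-1,4c\}$ and $\{2,c,6c-2\}$ of sum $7c$; specializing $c=2$ recovers $\{1,5,8\}$, $\{2,2,10\}$.

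For the existence of $s_{n-1}^*(n)$ when $n$ is large, I would build $n$-partitions out of disjoint \emph{toggle blocks}. Reserve $k$ pairwise disjoint triples of coordinates, fill the remaining $n-3k$ slots with $1$'s, and in the $i$-th triple install the identity above with its own widely separated parameter $c_i$, so the blocks never interfere. Because each block's two representations share the same block-sum and block-product, every one of the $2^k$ choices of representation yields an $n$-partition with the \emph{same} total sum and the \emph{same} total product, and distinct $c_i$ make these $2^k$ partitions pairwise different. Since $2^{\lfloor n/3\rfloor}\ge n-1$ for all but finitely many $n$, this already produces at least $n-1$ partitions. To hit a prescribed large $s$ I would let two of the blocks be tuning blocks of types $m=2$ and $m=3$, contributing $7c_1+13c_2$ to the sum; as $7$ and $13$ are coprime, these realize every sufficiently large value, so every large $s$ is attained. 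The finitely many exceptional small $n$ (roughly $4\le n\le 11$) I would dispatch by ad hoc explicit constructions. For $s_{n-1}(n)$ I additionally need condition $(\ref{distinct})$: the $n(n-1)$ parts used across the chosen partitions must be pairwise distinct. From the $2^k$ available partitions I would select $n-1$ of them and spread the $c_i$ (and, if necessary, rescale the whole configuration) so that no value repeats; this is the one place the toggle construction must be refined, since as stated it reuses the filler $1$'s and shares parts between a block's two forms.

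For the exact values in the case $n=3$ I would argue in two directions. First, the thresholds come from a finite computation: enumerating all $3$-partitions of $s=18$ shows their products are pairwise distinct, so $(18,p,3)$ is never admissible and $s_2^*(3)>18$; similarly one checks that $s=22$ admits no two $3$-partitions with six distinct parts and a common product, forcing $s_2(3)>22$. Second, every larger $s$ must be shown admissible. For $s\ge 19$ I would cover all residues by a finite list of families of the type above (for several values of $m$, together with variants whose leading entry is not fixed to $1$), dispatching the finitely many small $s$ not reached by the families with explicit pairs such as $\{2,8,9\},\{3,4,12\}$ for $s=19$ and $\{1,9,10\},\{2,3,15\}$ for $s=20$. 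For $s\ge 23$ the same families suffice once one checks that their six parts are distinct (for instance $\{4,9,10\},\{5,6,12\}$ for $s=23$), which holds for all large parameters.

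The hard part will be twofold. Conceptually, the delicate point is obtaining as many as $n-1$ partitions rather than just two: padding a single triple identity with $1$'s only ever yields two, so one must genuinely exploit the $(n-2)$-dimensional family of $n$-tuples with prescribed $e_1$ and $e_n$, which is exactly what the independent toggle blocks accomplish and which explains why the guarantee stops at $n-1$. Technically, the remaining work is in covering \emph{every} large $s$ (all residues, not merely one arithmetic progression) and in enforcing the distinctness condition $(\ref{distinct})$ for $s_{n-1}(n)$; and the precise constants $23$ and $19$ ultimately rest on the finite but nontrivial case analysis at $s=22$ and $s=18$.
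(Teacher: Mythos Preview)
The paper does not contain a proof of this theorem: it is stated as Kelly's result \cite{kelly1} and used as a black box. There is therefore no in-paper argument against which to compare your proposal.

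Judging your sketch on its own merits, the toggle-block idea and the parametric identity are a sound route to $s_{n-1}^*(n)$ for large $n$, and your plan for the exact values $s_2^*(3)=19$ and $s_2(3)=23$ (finite exhaustive lower bound plus parametric families covering all residues) is the natural one. The genuine gap is the passage to $s_{n-1}(n)$. Your $2^k$ toggle-partitions share parts heavily: whenever two of them agree on the state of a block they repeat that block's three entries verbatim, and since each block has only two states, among any three of your partitions some pair must agree on some block by pigeonhole. Hence no selection of $n-1\ge 3$ of them can satisfy condition~$(\ref{distinct})$. ``Spreading the $c_i$'' does not help, because the $c_i$ are fixed once the common sum $s$ is fixed; and even within a single partition your filler $1$'s already collide with the leading $1$ of every state-$A$ block. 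You note that the construction ``must be refined'' here, but that refinement is the whole content of the $s_{n-1}(n)$ assertion and is not supplied; a genuinely different mechanism---one producing many \emph{pairwise disjoint} $n$-partitions with common $s$ and $p$, as in \cite{kelly1,kelly2}---is required.
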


In the same paper, Kelly mentioned that the only known values of $s_{n-1}(n)$ and $s_{n-1}^*(n)$ were when $n=3$, and all other values were unknown. He later showed that for $n\geq 3$ and for any positive integer $r$, there exist infinitely many integers $s$ for which there are $r$ mutually disjoint $n$-partitions of $s$ such that the products of the partitions are all equal \cite{kelly2}.  

According to Kelly, his investigation into $n$-partitions of equal sum and product began with a conjecture, communicated orally, of T. S. Motzkin. Motzkin conjectured that for each sufficiently large $s$, there exists a positive integer $p$ such that the triple $(s,p,3)$ is admissible.  Although Theorem \ref{kelly} proves and generalizes Motzkin's conjecture, there are still many curious open questions related to this conjecture.  In fact, just recently in 2015, Sadek and El-Sissi parameterized all admissible triples of the form $(s,p,3)$ \cite{sadek}.

In this article, our study of admissible triples is threefold.  First, in Section~\ref{secf(s)}, we determine the value of the function
$$f(s)=|\{n\in\mathbb{N}:(s,p,n)\text{ is admissible for some }p\in\mathbb{N}\}|$$
for each positive integer $s$. Second, in Section~\ref{secadmisprod} we provide a full characterization of product-admissible numbers.  We further prove in Section~\ref{secadmisprod} that if $q$ is a prime and $j$ is a positive integer, then $q^j$ is product-admissible if and only if $j\geq 2q+4$.  Third, in Section~\ref{secmultipleparts}, we provide an algorithm to effectively calculate the values of $s_r^*(n)$, and as a generalization of Kelly's results, we generate a list of values of $s_{n-1}^*(n)$ for $3\leq n\leq21$.  Finally, we end our article in Section \ref{conclusion} with several conjectures regarding $s_r^*(n)$.

Before we move on to the next section, we would like to mention that the problem of integer partitions with equal products has connections with the ``Conway's wizard problem \cite{bennett}." In the 1960's, John Conway posed the following riddle.

\begin{quotation}
\noindent Last night I sat behind two wizards on a bus and overheard the following:\\

\begin{tabular}{p{80pt}p{300pt}}
Blue Wizard:& I have a positive integer number of children, whose ages are positive integers. The sum of their ages is the number of this bus, while the product is my own age.\\
Red Wizard:& How interesting! Perhaps if you told me your age and the number of your children, I could work out their individual ages?\\
Blue Wizard:& No, you could not.\\
Red Wizard:& Aha! At last, I know how old you are!\\
\end{tabular}\\

\noindent Apparently the Red Wizard had been trying to determine the Blue Wizard's age for some time. Now, what was the number of the bus?
\end{quotation}

Solving this riddle is equivalent to finding a positive integer $s$ such that there is a unique product $p$ and an integer $n$ to produce an admissible triple $(s,p,n)$.

\section{The function $f(s)$}\label{secf(s)}
The following theorem is the main result of this section. For a fixed $s$, we count the number of positive integers $n$ such that $(s,p,n)$ is admissible for some product $p$.

\begin{theorem}\label{sequencefs}
When $1\leq s\leq 11$, $f(s)=0$, and when $s\geq19$, $f(s)=s-10$. Finally, $(f(s))_{s=12}^{18}=(1, 2, 4, 4, 6, 7, 7)$.
\end{theorem}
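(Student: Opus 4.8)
The plan is to reduce admissibility to a statement about product-collisions among partitions of a small ``excess'' parameter, and then to invoke Kelly's threshold $s_2^*(3)=19$ from Theorem \ref{kelly} to dispose of all but finitely many cases. First I would record the elementary \emph{padding} principle: if $\{x_1,\dots,x_n\}$ and $\{y_1,\dots,y_n\}$ witness that $(s_0,p,n)$ is admissible, then adjoining $k$ copies of $1$ to each shows $(s_0+k,p,n+k)$ is admissible for every $k\ge 0$. Writing $m=s-n$ for the excess of an $n$-partition of $s$ (so $m=\sum_i(x_i-1)$), subtracting $1$ from each part identifies $n$-partitions of $s$ with partitions $\lambda$ of $m$ into at most $n$ parts, under which the product becomes $\prod_i(\lambda_i+1)$. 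Hence $(s,p,n)$ is admissible exactly when two distinct partitions $\lambda\ne\mu$ of $m=s-n$ satisfy $\prod(\lambda_i+1)=\prod(\mu_i+1)$ using at most $n$ parts each. I would package this by letting $N(m)$ be the least value of $\max(\ell(\lambda),\ell(\mu))$ over all such colliding pairs (and $N(m)=\infty$ if none exists); then $n$ is admissible for $s$ iff $N(s-n)\le n$, and since distinct $m$ give distinct $n=s-m$,
\[
f(s)=\#\{m\ge 0:\ m+N(m)\le s\}.
\]
I would note at once that $N(m)\ge 3$ always, since no collision can use only one- or two-part partitions (the two-part value $(a+1)(m+1-a)$ is strictly concave, hence injective), so $m+N(m)\ge m+3$.

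Next I would prove the bound $f(s)\le s-10$ for all $s\ge 10$ by exhibiting ten never-admissible values of $n$. The values $n\in\{1,2\}$ fail trivially ($n=1$ has a unique partition; for $n=2$ the product $a(s-a)$ is injective on unordered pairs). The values $n\in\{s-7,\dots,s\}$ correspond to $m=s-n\in\{0,\dots,7\}$, and a finite check shows that for $0\le m\le 7$ all partitions of $m$ have pairwise distinct values of $\prod(\lambda_i+1)$, i.e.\ $N(m)=\infty$. This same check gives $f(s)=0$ for $s\le 11$: any admissible $n$ needs $m\ge 8$ and $n\ge 3$, forcing $s\ge 11$, while the lone candidate at $s=11$ (namely $m=8$, $n=3$) is killed by $N(8)=4$. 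For the window $12\le s\le 18$ I would compute the finitely many relevant data, namely $N(8),\dots,N(15)=4,5,3,3,4,3,3,4$, each via an explicit colliding pair plus a verification that no pair with fewer parts collides; for instance $m=8$ gives $\{6,2,2,2\},\{4,4,3,1\}$ and $m=10$ gives $\{6,6,1\},\{9,2,2\}$. Since $m\le 7$ never collides and $m\ge 16$ satisfies $m+N(m)\ge 19$, only $8\le m\le 15$ can contribute for $s\le 18$, and substituting $m+N(m)=12,14,13,14,16,16,17,19$ into the displayed formula yields exactly $(f(s))_{s=12}^{18}=(1,2,4,4,6,7,7)$.

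For $s\ge 19$ I would show that every $n$ with $3\le n\le s-8$ is admissible, which together with the upper bound forces $f(s)=s-10$. Split the range at $n=s-16$. For $3\le n\le s-16$, put $s_0=s-n+3\ge 19$; by Theorem \ref{kelly} there are two distinct $3$-partitions of $s_0$ with equal product, and padding each up to $n$ parts with copies of $1$ gives two distinct $n$-partitions of $s$ with equal product (distinctness is preserved since we adjoin the same elements to both). For $s-15\le n\le s-8$, equivalently $m=s-n\in\{8,\dots,15\}$, I would reuse the explicit colliding pairs above, again padded up to $n$ parts; this is legitimate precisely because $s\ge 19$ makes $n=s-m\ge N(m)$ for each such $m$, the binding inequality occurring at $m=15$, which needs $s\ge 15+N(15)=19$.

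The main obstacle is the sharp threshold at the top of the $n$-range. The argument closes only because Kelly's bound is exactly $19$ and because the eight small-excess values $m=8,\dots,15$ all satisfy $m+N(m)\le s$ for every $s\ge 19$, with $m=15$ tight. Getting the transitional window $12\le s\le 18$ right therefore rests entirely on the finite but non-automatic determination of $N(8),\dots,N(15)$, and especially on verifying the \emph{non-existence} of few-part collisions---for example that $N(15)=4$ and not $3$, which is exactly what makes $f(18)=7$ rather than $8$. I expect the bulk of the care to go into these nonexistence verifications, since the existence half is handed to us by the padding construction and the cited threshold.
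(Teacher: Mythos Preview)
Your proposal is correct and matches the paper's argument in substance: both rely on the padding principle, Kelly's threshold $s_2^*(3)=19$, and a finite verification for the small excess values $m=s-n\le 15$. Your organization via the function $N(m)$ and the counting formula $f(s)=\#\{m\ge 0:m+N(m)\le s\}$ is a tidier packaging than the paper's, which instead proves three separate lemmas (that $n\in\{1,2,s-7,\dots,s\}$ is never admissible, the padding step, and the specific small-case checks) and then reads off the answer from a visual table indexed by $(s,n)$. The finite work you need---that no product collisions occur among partitions of $m\le 7$, and that $N(8),\dots,N(15)=4,5,3,3,4,3,3,4$---is exactly the content of the paper's Lemmas~\ref{impossiblen} and~\ref{computationresults}, so neither approach escapes the casework; but your reparametrization by the excess $m$ makes the structure of the answer, and in particular why the cutoff is precisely $s=19$ (namely because $\max_{8\le m\le 15}(m+N(m))=19$ coincides with Kelly's bound), more transparent than the paper's tabular presentation.
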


In order to prove this theorem, we first introduce several lemmas regarding the function $F(s)=\{n\in\mathbb{N}:(s,p,n)\text{ is admissible for some }p\in\mathbb{N}\}$.

\begin{lemma}\label{impossiblen}
For each $s\in\N$, $\{1,2,s-7,s-6,\dotsc,s-1,s\}\cap F(s)=\emptyset$. In other words, if $n\in\{1,2,s-7,s-6,\dotsc,s-1,s\}$, then $(s,p,n)$ is not admissible for any $p\in\mathbb{N}$.
\end{lemma}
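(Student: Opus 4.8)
The plan is to separate the two small exceptional values $n\in\{1,2\}$ from the seven large values $n\in\{s-7,s-6,\dotsc,s\}$, since they are excluded for entirely different reasons. For $n=1$ there is a unique $1$-partition of $s$, namely $\{s\}$, and admissibility requires two \emph{different} multisets, so $1\notin F(s)$. For $n=2$, any $2$-partition $\{x_1,x_2\}$ of $s$ with product $p$ forces $x_1,x_2$ to be the two roots of $t^2-st+p$; a monic quadratic has a unique multiset of roots, so the pair $(s,p)$ determines $\{x_1,x_2\}$, and no two distinct $2$-partitions can share both sum and product. Hence $2\notin F(s)$ as well.

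For the seven large values I would set up a reduction to partitions of a small integer. Writing $n=s-k$ with $0\le k\le 7$, I substitute $y_i=x_i-1$ in an $n$-partition $\{x_1,\dotsc,x_n\}$ of $s$. The $y_i$ are then nonnegative integers summing to $s-n=k$, and discarding the zeros yields a partition $\lambda$ of $k$. This assignment is injective on $n$-partitions of $s$, because the number of discarded zeros equals $n$ minus the number of parts of $\lambda$ and $n$ is fixed, so $X$ is recovered from $\lambda$. The decisive point is that the product satisfies $\prod_i x_i=\prod_i(1+y_i)=\prod_j(1+\lambda_j)$, where the last product runs over the parts of $\lambda$; thus the product of an $n$-partition of $s$ depends \emph{only} on the associated partition of $k$.

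Consequently it suffices to prove that for each fixed $k\in\{0,1,\dotsc,7\}$ the map $\lambda\mapsto\prod_j(1+\lambda_j)$ is injective on the partitions of $k$: two distinct $n$-partitions of $s$ map to distinct partitions of $k$, which would then carry distinct products, contradicting admissibility. The ``at most $n$ parts'' restriction, which is active only when $s<2k$, merely deletes some partitions of $k$ from consideration and so cannot create a collision; the injectivity of the full map is therefore all that is needed. This turns the lemma into a finite verification: I would tabulate all partitions of $k$ for $0\le k\le7$ (there are $1,1,2,3,5,7,11,15$ of them) and record $\prod_j(1+\lambda_j)$, checking pairwise distinctness within each fixed $k$.

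The main point to get right is this final finite check, especially for $k=6$ and $k=7$ where there are many partitions and the products become large; it is routine but must be carried out with care. It is worth noting that the cutoff at $k=7$ is sharp: already at $k=8$ the partitions $5+1+1+1$ and $3+3+2$ both give $6\cdot2\cdot2\cdot2=4\cdot4\cdot3=48$, corresponding to the equal-sum equal-product pair $\{6,2,2,2,1,\dotsc\}$ and $\{4,4,3,1,\dotsc\}$ (realized for instance as $\{6,2,2,2\}$ and $\{4,4,3,1\}$ when $s=12$, $n=4$). This first collision is exactly why the excluded interval ends at $s-7$, and it dovetails with Theorem~\ref{sequencefs}, where $f(s)$ first becomes nonzero at $s=12$.
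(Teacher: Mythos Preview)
Your proof is correct and follows essentially the same route as the paper: the cases $n=1,2$ are handled identically (your quadratic-roots phrasing for $n=2$ is equivalent to the paper's factorization $(s-r-r')(r-r')=0$), and for $n=s-t$ with $0\le t\le7$ the paper simply tabulates all $(s-t)$-partitions of $s$---written as $\{1,1,\dotsc,1,\ast,\dotsc,\ast\}$---together with their products, which is exactly your finite check after the substitution $y_i=x_i-1$. Your explicit bijection with partitions of $k=s-n$ is a cleaner framing of the same computation, and the sharpness remark at $k=8$ is a nice addition not present in the paper.
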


\begin{proof}
If $n=1$, then the only $1$-partition of $s$ is $\{s\}$. If $n=2$, then all the $2$-partitions of $s$ are of the form $\{r,s-r\}$, where $r\in\N$. Assume that there exist two different partitions $\{r,s-r\}$ and $\{r',s-r'\}$ satisfying $T\{r,s-r\}=T\{r',s-r'\}=(s,p,2)$ for some $p\in\N$. Then $r(s-r)=r'(s-r')$, which implies $s(r-r')-(r^2-r'^2)=(s-r-r')(r-r')=0$. In other words, $r=r'$ or $r=s-r'$, contradicting that $\{r,s-r\}\neq\{r',s-r'\}$.

If $n=s-t$ for some $t=0,1,2,\dotsc,7$, we can now assume that $s-t\geq3$ for a meaningful discussion. Here is a table of partitions of $s$ into $s-t$ parts for each value $t$, together with their corresponding product $p$.

\begin{center}
\begin{tabular}{|c|l|c|}
\hline
$t$& Partitions of $s$ into $s-t$ parts& $p$\\
\hline
$0$& $\{1,1,\dotsc,1\}$& $1$\\
\hline
$1$& $\{1,1,\dotsc,1,2\}$& $2$\\
\hline
$2$& $\{1,1,\dotsc,1,3\}$& $3$\\
& $\{1,1,\dotsc,1,2,2\}$& $4$\\
\hline
$3$& $\{1,1,\dotsc,1,4\}$& $4$\\
& $\{1,1,\dotsc,1,2,3\}$& $6$\\
& $\{1,1,\dotsc,1,2,2,2\}$& $8$\\
\hline
$4$& $\{1,1,\dotsc,1,5\}$& $5$\\
& $\{1,1,\dotsc,1,2,4\}$& $8$\\
& $\{1,1,\dotsc,1,3,3\}$& $9$\\
& $\{1,1,\dotsc,1,2,2,3\}$& $12$\\
& $\{1,1,\dotsc,1,2,2,2,2\}$& $16$\\
\hline
$5$& $\{1,1,\dotsc,1,6\}$& $6$\\
& $\{1,1,\dotsc,1,2,5\}$& $10$\\
& $\{1,1,\dotsc,1,3,4\}$& $12$\\
& $\{1,1,\dotsc,1,2,2,4\}$& $16$\\
& $\{1,1,\dotsc,1,2,3,3\}$& $18$\\
& $\{1,1,\dotsc,1,2,2,2,3\}$& $24$\\
& $\{1,1,\dotsc,1,2,2,2,2,2\}$& $32$\\
\hline
\end{tabular}
\begin{tabular}{|c|l|c|}
\hline
$t$& Partitions of $s$ into $s-t$ parts& $p$\\
\hline
$6$& $\{1,1,\dotsc,1,7\}$& $7$\\
& $\{1,1,\dotsc,1,2,6\}$& $12$\\
& $\{1,1,\dotsc,1,3,5\}$& $15$\\
& $\{1,1,\dotsc,1,4,4\}$& $16$\\
& $\{1,1,\dotsc,1,2,2,5\}$& $20$\\
& $\{1,1,\dotsc,1,2,3,4\}$& $24$\\
& $\{1,1,\dotsc,1,3,3,3\}$& $27$\\
& $\{1,1,\dotsc,1,2,2,2,4\}$& $32$\\
& $\{1,1,\dotsc,1,2,2,3,3\}$& $36$\\
& $\{1,1,\dotsc,1,2,2,2,2,3\}$& $48$\\
& $\{1,1,\dotsc,1,2,2,2,2,2,2\}$& $64$\\
\hline
$7$& $\{1,1,\dotsc,1,8\}$& $8$\\
& $\{1,1,\dotsc,1,2,7\}$& $14$\\
& $\{1,1,\dotsc,1,3,6\}$& $18$\\
& $\{1,1,\dotsc,1,4,5\}$& $20$\\
& $\{1,1,\dotsc,1,2,2,6\}$& $24$\\
& $\{1,1,\dotsc,1,2,3,5\}$& $30$\\
& $\{1,1,\dotsc,1,2,4,4\}$& $32$\\
& $\{1,1,\dotsc,1,3,3,4\}$& $36$\\
& $\{1,1,\dotsc,1,2,2,2,5\}$& $40$\\
& $\{1,1,\dotsc,1,2,2,3,4\}$& $48$\\
& $\{1,1,\dotsc,1,2,3,3,3\}$& $54$\\
& $\{1,1,\dotsc,1,2,2,2,2,4\}$& $64$\\
& $\{1,1,\dotsc,1,2,2,2,3,3\}$& $72$\\
& $\{1,1,\dotsc,1,2,2,2,2,2,3\}$& $96$\\
& $\{1,1,\dotsc,1,2,2,2,2,2,2,2\}$& $128$\\
\hline
\end{tabular}
\end{center}

From this table, we can see that all products $p$ are unique for each $n=s-t$. Therefore, $1,2,s-7,s-6,\dotsc,s-1,s\notin F(s)$.
\end{proof}

\begin{lemma}\label{inductionstep}
Let $s\in\N$. If there exists a positive integer $n$ such that $n\in F(s)$, then for all positive integers $s'$ and $n'$ satisfying $n'\leq s'$, we have $n+n'\in F(s+s')$.
\end{lemma}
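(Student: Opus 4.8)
The plan is to take a witnessing pair for $n\in F(s)$ and pad both partitions with one common $n'$-partition of $s'$. Since $n\in F(s)$, there are two different $n$-partitions $X=\{x_1,\dotsc,x_n\}$ and $Y=\{y_1,\dotsc,y_n\}$ of $s$ with $T\{x_1,\dotsc,x_n\}=T\{y_1,\dotsc,y_n\}=(s,p,n)$ for some $p\in\N$. The hypothesis $n'\leq s'$ is exactly what guarantees that $s'$ has an $n'$-partition: writing $Z=\{1,1,\dotsc,1,s'-n'+1\}$ for the multiset with $n'-1$ ones and the final part $s'-n'+1\geq1$ gives one, with product $p'=s'-n'+1$.

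Next I would let $X'$ be the $(n+n')$-partition of $s+s'$ consisting of the parts $x_1,\dotsc,x_n$ together with the parts of $Z$, and define $Y'$ analogously from $Y$. Both $X'$ and $Y'$ have size $n+n'$, sum $s+s'$, and product $pp'$, so $T(X')=T(Y')=(s+s',pp',n+n')$, exhibiting two $(n+n')$-partitions of $s+s'$ that share the same sum and product.

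The only thing left to verify is that $X'$ and $Y'$ are distinct multisets, and this is the one point where padding could conceivably create a collision. However, adjoining a fixed multiset $Z$ to two multisets is cancellative: if $X'=Y'$, then removing the multiplicities contributed by $Z$ forces $X=Y$, contradicting $X\neq Y$. Hence $X'\neq Y'$, so $(s+s',pp',n+n')$ is admissible and $n+n'\in F(s+s')$. No case analysis is required; the entire content of the lemma is the existence of the padding multiset $Z$, supplied precisely by the inequality $n'\leq s'$, together with this cancellation observation.
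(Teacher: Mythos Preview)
Your proof is correct and follows essentially the same approach as the paper: both pad $X$ and $Y$ with the multiset $\{1,\dotsc,1,s'-n'+1\}$ to obtain two $(n+n')$-partitions of $s+s'$ with common product $p(s'-n'+1)$. If anything, you are slightly more careful, since you explicitly verify via multiset cancellation that the padded partitions remain distinct, a point the paper leaves implicit.
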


\begin{proof}
Suppose $n\in F(s)$. Then there exist at least two different multisets of $n$ positive integers, $\{x_1,x_2,\dotsc,x_n\}$ and $\{y_1,y_2,\dotsc,y_n\}$, satisfying $T\{x_1,x_2,\dotsc,x_n\}=T\{y_1,y_2,\dotsc,y_n\}=(s,p,n)$. For all positive integers $s'$ and $n'$ satisfying $n'\leq s'$, let $x_{n+1}=x_{n+2}=\dotsb=x_{n+n'-1}=y_{n+1}=y_{n+2}=\dotsb=y_{n+n'-1}=1$ and $x_{n+n'}=y_{n+n'}=s'-(n'-1)$. We can extend our multisets of $n$ positive integers to $\{x_1,x_2,\dotsc,x_n,x_{n+1},\dotsc x_{n+n'}\}$ and $\{y_1,y_2,\dotsc,y_n,y_{n+1},\dotsc,y_{n+n'}\}$ such that
$$T\{x_1,x_2,\dotsc,x_n,\dotsc,x_{n+n'}\}=T\{y_1,y_2,\dotsc,y_n,\dotsc,y_{n+n'} \}=(s+s',p(s'-(n'-1)),n+n').$$
This implies $n+n'\in F(s+s')$.
\end{proof}

\begin{lemma}\label{computationresults}
\begin{enumerate}[$(a)$]
\item\label{notinFs} For $s=11,12,15,18$, $3\notin F(s)$. Also, $4\notin F(13)$.
\item\label{inFs} For $s=13,14,16,17$, $3\in F(s)$. Also, $4\in F(12)$.
\end{enumerate}
\end{lemma}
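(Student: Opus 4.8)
The statement is purely finite and computational, so the plan is to settle the two parts by opposite strategies: for the membership claims in $(\ref{inFs})$ it suffices to \emph{exhibit} one colliding pair of partitions, whereas for the non-membership claims in $(\ref{notinFs})$ I must \emph{rule out} every possible collision by enumeration.

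For $(\ref{inFs})$, producing a single pair of distinct $n$-partitions of $s$ with a common product certifies $n\in F(s)$ at once. One can take $\{1,6,6\}$ and $\{2,2,9\}$ for $s=13$ (both with product $36$), $\{2,6,6\}$ and $\{3,3,8\}$ for $s=14$ (product $72$), $\{2,5,9\}$ and $\{3,3,10\}$ for $s=16$ (product $90$), and $\{3,6,8\}$ and $\{4,4,9\}$ for $s=17$ (product $144$); for the size-four claim, $\{1,3,4,4\}$ and $\{2,2,2,6\}$ are distinct $4$-partitions of $12$ with common product $48$. Each witness is verified by a one-line check of its sum and product.

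For $(\ref{notinFs})$, I would use the observation that, since all $n$-partitions of a fixed $s$ already share the sum $s$, two of them form an admissible triple $(s,p,n)$ precisely when they share the product $p$; hence $n\notin F(s)$ is equivalent to the product map being \emph{injective} on the finite set of $n$-partitions of $s$. I would therefore enumerate every partition of $s$ into exactly $n$ parts and list its product in a table, in the same style as the table in the proof of Lemma~\ref{impossiblen}, and read off that no product is repeated. For $n=3$ this means looping the smallest part $a$ from $1$ to $\lfloor s/3\rfloor$, the middle part $b$ from $a$ to $\lfloor(s-a)/2\rfloor$, and setting $c=s-a-b$; for $s=11,12,15,18$ this produces only a few dozen partitions, and the single case $n=4$, $s=13$ has even fewer. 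The bookkeeping can be trimmed using the $n=2$ argument already established in Lemma~\ref{impossiblen}: if two $3$-partitions share their smallest part $a$, then after deleting one copy of $a$ the remaining parts form two $2$-partitions of $s-a$ with equal sum and equal product, which must therefore coincide; so no two partitions with the same smallest element can collide, and only partitions with distinct smallest parts remain to be compared numerically.

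The main obstacle here is not any conceptual difficulty but the need to guarantee that the enumeration is \emph{complete} and the arithmetic is correct, since overlooking a single repeated product would invalidate a non-membership claim. The small size of each case makes a by-hand table feasible, and the outcome can be independently double-checked by a brief exhaustive computer search.
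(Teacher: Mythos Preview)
Your proposal is correct and follows essentially the same approach as the paper: exhibit explicit witness pairs for part~$(\ref{inFs})$ and argue by exhaustive enumeration for part~$(\ref{notinFs})$. Your witness $\{2,6,6\},\{3,3,8\}$ for $s=14$ differs from the paper's $\{1,5,8\},\{2,2,10\}$ but is equally valid, and your shortcut (partitions sharing a smallest part cannot collide, by the $n=2$ case of Lemma~\ref{impossiblen}) is a pleasant refinement that the paper does not make explicit.
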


\begin{proof}
Statement $(\ref{notinFs})$ is proved by exhaustion of all $3$-partitions of $11$, $12$, $15$, and $18$, as well as all $4$-partitions of $13$. Statement $(\ref{inFs})$ is due to the following observations.
\begin{center}
$T\{1,6,6\}=T\{2,2,9\}=(13,36,3)$ implies $3\in F(13)$,\\
$T\{1,5,8\}=T\{2,2,10\}=(14,40,3)$ implies $3\in F(14)$,\\
$T\{2,5,9\}=T\{3,3,10\}=(16,90,3)$ implies $3\in F(16)$,\\
$T\{3,6,8\}=T\{4,4,9\}=(17,144,3)$ implies $3\in F(17)$,\\
and $T\{1,3,4,4\}=T\{2,2,2,6\}=(12,48,4)$ implies $4\in F(12)$.
\end{center}
\end{proof}

Now, we are ready to prove Theorem \ref{sequencefs}.

\begin{proof}[Proof of Theorem $\ref{sequencefs}$]
When $1\leq s\leq 10$, for all positive integers $n\leq s$, $n\in\{1,2,s-7,s-6,\dotsc,s-1,s\}$. By Lemma \ref{impossiblen}, $f(s)=0$. When $s\geq11$, we summarize the procedures in the following table.
\begin{center}\begin{tabular}{|c||c|c|c|c|c|c|c|c|c|c|c|c|c|c|c|c|c|c|c|c|}
\hline
\textbf{Sum $s$}&\multicolumn{20}{c|}{\textbf{Values $n\leq s$}}\\
\hhline{|=||====================|}
11&1&2&\xcancel{3}&4&5&6&7&8&9&10&11&&&&&&&&&\\
\hline
12&1&2&\xcancel{3}&\circled{4}&5&6&7&8&9&10&11&12&&&&&&&&\\
\hline
13&1&2&\circled{3}&\xcancel{4}&\cellcolor{lightgray!50}5&6&7&8&9&10&11&12&13&&&&&&&\\
\hline
14&1&2&\circled{3}&\cellcolor{lightgray!50}4&\cellcolor{lightgray!50}5&\cellcolor{lightgray!50}6&7&8&9&10&11&12&13&14&&&&&&\\
\hline
15&1&2&\xcancel{3}&\cellcolor{lightgray!50}4&\cellcolor{lightgray!50}5&\cellcolor{lightgray!50}6&\cellcolor{lightgray!50}7&8&9&10&11&12&13&14&15&&&&&\\
\hline
16&1&2&\circled{3}&\cellcolor{lightgray!50}4&\cellcolor{lightgray!50}5&\cellcolor{lightgray!50}6&\cellcolor{lightgray!50}7&\cellcolor{lightgray!50}8&9&10&11&12&13&14&15&16&&&&\\
\hline
17&1&2&\circled{3}&\cellcolor{lightgray!50}4&\cellcolor{lightgray!50}5&\cellcolor{lightgray!50}6&\cellcolor{lightgray!50}7&\cellcolor{lightgray!50}8&\cellcolor{lightgray!50}9&10&11&12&13&14&15&16&17&&&\\
\hline
18&1&2&\xcancel{3}&\cellcolor{lightgray!50}4&\cellcolor{lightgray!50}5&\cellcolor{lightgray!50}6&\cellcolor{lightgray!50}7&\cellcolor{lightgray!50}8&\cellcolor{lightgray!50}9&\cellcolor{lightgray!50}10&11&12&13&14&15&16&17&18&&\\
\hline
19&1&2&\boxed{3}&\cellcolor{lightgray!50}4&\cellcolor{lightgray!50}5&\cellcolor{lightgray!50}6&\cellcolor{lightgray!50}7&\cellcolor{lightgray!50}8&\cellcolor{lightgray!50}9&\cellcolor{lightgray!50}10&\cellcolor{lightgray!50}11&12&13&14&15&16&17&18&19&\\
\hline
20&1&2&\boxed{3}&\cellcolor{lightgray!50}4&\cellcolor{lightgray!50}5&\cellcolor{lightgray!50}6&\cellcolor{lightgray!50}7&\cellcolor{lightgray!50}8&\cellcolor{lightgray!50}9&\cellcolor{lightgray!50}10&\cellcolor{lightgray!50}11&\cellcolor{lightgray!50}12&13&14&15&16&17&18&19&20\\
\hline
\end{tabular}\end{center}

In this table, those crossed-out entries, i.e., $\xcancel{n}$, indicate $n\notin F(s)$ by Lemma $\ref{computationresults}(\ref{notinFs})$. Those circled entries, i.e., $\circled{n}$, indicate $n\in F(s)$ by Lemma $\ref{computationresults}(\ref{inFs})$. Those shaded entries indicate $n\in F(s)$ by Lemma \ref{inductionstep}. Since $s_2^*(3)=19$ by Theorem \ref{kelly}, $3\in F(s)$ for all $s\geq19$. This fact is indicated by those boxed entries, i.e., $\boxed{n}$. Finally, those plain entries indicate $n\notin F(s)$ by Lemma \ref{impossiblen}.
\end{proof}

\section{Product-admissible numbers}\label{secadmisprod}
In Section~\ref{secf(s)}, we fixed the sum in the triple $(s,p,n)$ to study the function $f(s)$.  We now turn our attention to fixing the product of the triple.

\begin{theorem}\label{admissibleprod}
Let $q_1,q_2,\dotsc,q_k$ be primes, and let $j_1,j_2,\dotsc,j_k\in\N$. Then $p=q_1^{j_1}q_2^{j_2}\dotsb q_k^{j_k}$ is product-admissible if and only if there exists a nonzero multivariate polynomial $\chi$ of $k$ variables with integer coefficients such that
\begin{itemize}
\item $\chi(q_1,q_2,\dotsc,q_k)=0$,
\item $\chi_\ell(1,1,\dotsc,1)=0$ for each $1\leq\ell\leq k$, where $\chi_\ell$ is the partial derivative of $\chi$ with respect to the $\ell$-th variable,
\item for each $1\leq\ell\leq k$, the sum of the absolute values of the coefficients in $\chi_\ell$ is at most $2j_\ell$, and
\item $\chi(1,1,\dotsc,1)=0$.
\end{itemize}
\end{theorem}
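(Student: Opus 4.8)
The plan is to first convert product-admissibility into a purely multiplicative statement about factorizations of $p$, and then to build an explicit dictionary between pairs of such factorizations and the polynomial $\chi$. First I would strip away all parts equal to $1$. If $(s,p,n)$ is admissible via $X\neq Y$, then every part divides $p$, so the nontrivial parts (those $\geq2$) of $X$ and of $Y$ each form a factorization of $p$ into factors $\geq2$; call these multisets $M_X$ and $M_Y$. For a fixed size $n$ the number of $1$'s in a partition is determined by the number of nontrivial parts, so $X=Y$ exactly when $M_X=M_Y$, and after cancelling the common contribution of the $1$'s the equal-sum and equal-size conditions collapse to the single requirement $\sum_{m\in M_X}(m-1)=\sum_{m\in M_Y}(m-1)$. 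Conversely, any two distinct factorizations of $p$ with this property can be padded with $1$'s to attain a common sum, product, and size for all large enough $n$. Thus I would prove the cleaner equivalent: $p$ is product-admissible if and only if there are two distinct factorizations $M_X\neq M_Y$ of $p$ into factors $\geq2$ with $\sum_{M_X}(m-1)=\sum_{M_Y}(m-1)$.

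The dictionary associates to each factor $q_1^{c_1}\dotsb q_k^{c_k}$ the monomial $X_1^{c_1}\dotsb X_k^{c_k}$, and to a pair $(M_X,M_Y)$ the polynomial whose coefficient $\gamma_{\mathbf{c}}$ on that monomial is the multiplicity of the factor in $M_X$ minus its multiplicity in $M_Y$, together with a constant term to be fixed. For the forward direction I would take this $\chi$ and verify the four bullets directly. The derivative condition $\chi_\ell(1,\dotsc,1)=\sum_{\mathbf{c}}\gamma_{\mathbf{c}}c_\ell=0$ says exactly that $M_X$ and $M_Y$ have equal total $q_\ell$-exponent, which holds because both have product $p$. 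Since the map $\mathbf{c}\mapsto\mathbf{c}-e_\ell$ is injective, no cancellation occurs in $\chi_\ell$, so its coefficients are precisely the $\gamma_{\mathbf{c}}c_\ell$; as $|\gamma_{\mathbf{c}}|$ is at most the sum of the two multiplicities, the absolute values of these coefficients sum to at most $\sum_{\mathbf{c}}(\textup{mult}_X+\textup{mult}_Y)c_\ell=2j_\ell$. Finally, choosing the constant term $\gamma_{\mathbf{0}}=|M_Y|-|M_X|$ forces $\chi(1,\dotsc,1)=0$, while $\chi(q_1,\dotsc,q_k)=0$ reads $\gamma_{\mathbf{0}}=\sum_{M_Y}m-\sum_{M_X}m$; the two together are equivalent to the sum-equality $\sum_{M_X}(m-1)=\sum_{M_Y}(m-1)$, and $\chi\neq0$ because $M_X\neq M_Y$.

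For the reverse direction I would run the correspondence backwards: given $\chi$, place $\gamma_{\mathbf{c}}$ copies of $q_1^{c_1}\dotsb q_k^{c_k}$ into $M_X$ when $\gamma_{\mathbf{c}}>0$ and $|\gamma_{\mathbf{c}}|$ copies into $M_Y$ when $\gamma_{\mathbf{c}}<0$, ignoring the constant term. The derivative condition makes $M_X$ and $M_Y$ share each $q_\ell$-exponent, say $j'_\ell$, so they have a common product $p'\mid p$; condition three then gives $2j'_\ell\leq2j_\ell$, i.e.\ $j'_\ell\leq j_\ell$. The crucial repair step is to append $j_\ell-j'_\ell$ copies of the prime $q_\ell$ to \emph{both} $M_X$ and $M_Y$, for every $\ell$: this raises both products to $p$, preserves distinctness by cancellation of the common added multiset, and shifts both sides of $\sum(m-1)$ equally, so the sum-equality extracted from conditions one and four survives. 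Distinctness before the repair follows because a nonzero $\chi$ with $\chi(1,\dotsc,1)=0$ cannot be a pure constant, forcing some nonconstant $\gamma_{\mathbf{c}}\neq0$, and positive and negative coefficients never land on the same monomial.

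I expect the main obstacle to be the bookkeeping around the constant term and the repair step. One must check that the bound in condition three is exactly $2j_\ell$ rather than a larger multiple, since it is precisely this sharp bound that yields $j'_\ell\leq j_\ell$ and hence nonnegative deficiencies $j_\ell-j'_\ell$ that can be filled by adding the same primes to each factorization; and one must confirm that this filling, together with the eventual padding by $1$'s, disturbs none of the four encoded equalities while keeping the two partitions distinct. Once these are in place, exhibiting a valid $n$ and the resulting admissible triple is routine.
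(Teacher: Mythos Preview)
Your proposal is correct and follows essentially the same route as the paper: encode the difference of multiplicities as the coefficients of $\chi$, then read off the four bullets as the equal-sum, equal-exponent, exponent-bound, and equal-size conditions respectively. The only cosmetic differences are that you first strip the $1$'s and pass through the reformulation $\sum_{M_X}(m-1)=\sum_{M_Y}(m-1)$, whereas the paper keeps the $1$'s in play via the constant coefficient throughout; and in the repair step of the converse you append $j_\ell-j'_\ell$ copies of each individual prime $q_\ell$ to both multisets, whereas the paper appends the single element $q_1^{j'_1}\dotsb q_k^{j'_k}$ to both.
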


\begin{proof}
Let $q_1,q_2,\dotsc,q_k$ be primes, and let $j_1,j_2,\dotsc,j_k\in\N$ be such that $p=q_1^{j_1}q_2^{j_2}\dotsb q_k^{j_k}$ is product-admissible, i.e., there exists $n\in\N$ and at least two different multisets of $n$ positive integers, $\{x_1,x_2,\dotsc,x_n\}$ and $\{y_1,y_2,\dotsc,y_n\}$, satisfying $T\{x_1,x_2,\dotsc,x_n\}=T\{y_1,y_2,\dotsc,y_n\}=(s,p,n)$ for some $s\in\N$. Since $x_1x_2\dotsb x_n=y_1y_2\dotsb y_n=q_1^{j_1}q_2^{j_2}\dotsb q_k^{j_k}$, by the fundamental theorem of arithmetic, for each $1\leq i\leq n$, we can let $x_i=q_1^{\alpha_{i1}}q_2^{\alpha_{i2}}\dotsb q_k^{\alpha_{ik}}$ and $y_i=q_1^{\beta_{i1}}q_2^{\beta_{i2}}\dotsb q_k^{\beta_{ik}}$, where $\alpha_{i\ell},\beta_{i\ell}\in\N\cup\{0\}$ for all $1\leq i\leq n$ and $1\leq\ell\leq k$, and $\sum_{i=1}^n\alpha_{i\ell}=\sum_{i=1}^n\beta_{i\ell}=j_\ell$ for each $1\leq\ell\leq k$.

For all $(t_1,t_2,\dotsc,t_k)$ satisfying $0\leq t_\ell\leq j_\ell$ for each $1\leq\ell\leq k$, let $a_{t_1,t_2,\dotsc,t_k}$ be the number of times $q_1^{t_1}q_2^{t_2}\dotsb q_k^{t_k}$ appears in $\{x_1,x_2,\dotsc,x_n\}$, and let $b_{t_1,t_2,\dotsc,t_k}$ be the number of times $q_1^{t_1}q_2^{t_2}\dotsb q_k^{t_k}$ appears in $\{y_1,y_2,\dotsc,y_n\}$. For all $j\in\N$, let $[j]=\{0,1,2,\dotsc,j\}$. Then
\begin{itemize}
\item $\displaystyle\sum_{(t_1,t_2,\dotsc,t_k)\in[j_1]\times[j_2]\times\dotsb\times[j_k]}a_{t_1,t_2,\dotsc,t_k}q_1^{t_1}q_2^{t_2}\dotsb q_k^{t_k}\\=\sum_{(t_1,t_2,\dotsc,t_k)\in[j_1]\times[j_2]\times\dotsb\times[j_k]}b_{t_1,t_2,\dotsc,t_k}q_1^{t_1}q_2^{t_2}\dotsb q_k^{t_k}=s$,
\item $\displaystyle\sum_{(t_1,t_2,\dotsc,t_k)\in[j_1]\times[j_2]\times\dotsb\times[j_k]}a_{t_1,t_2,\dotsc,t_k}t_\ell=\sum_{(t_1,t_2,\dotsc,t_k)\in[j_1]\times[j_2]\times\dotsb\times[j_k]}b_{t_1,t_2,\dotsc,t_k}t_\ell=j_\ell$ for each $1\leq\ell\leq k$, and
\item $\displaystyle\sum_{(t_1,t_2,\dotsc,t_k)\in[j_1]\times[j_2]\times\dotsb\times[j_k]}a_{t_1,t_2,\dotsc,t_k}=\sum_{(t_1,t_2,\dotsc,t_k)\in[j_1]\times[j_2]\times\dotsb\times[j_k]}b_{t_1,t_2,\dotsc,t_k}=n$.
\end{itemize}
If we subtract the right hand side from the left, and relabel $c_{t_1,t_2,\dotsc,t_k}=a_{t_1,t_2,\dotsc,t_k}-b_{t_1,t_2,\dotsc,t_k}$ for each $(t_1,t_2,\dotsc,t_k)\in[j_1]\times[j_2]\times\dotsb\times[j_k]$, we get
\begin{itemize}
\item $\displaystyle\sum_{(t_1,t_2,\dotsc,t_k)\in[j_1]\times[j_2]\times\dotsb\times[j_k]}c_{t_1,t_2,\dotsc,t_k}q_1^{t_1}q_2^{t_2}\dotsb q_k^{t_k}=0$,
\item $\displaystyle\sum_{(t_1,t_2,\dotsc,t_k)\in[j_1]\times[j_2]\times\dotsb\times[j_k]}c_{t_1,t_2,\dotsc,t_k}t_\ell=0$ for each $1\leq\ell\leq k$,
\item $\displaystyle\sum_{(t_1,t_2,\dotsc,t_k)\in[j_1]\times[j_2]\times\dotsb\times[j_k]}|c_{t_1,t_2,\dotsc,t_k}t_\ell|\leq\displaystyle\sum_{(t_1,t_2,\dotsc,t_k)\in[j_1]\times[j_2]\times\dotsb\times[j_k]}|a_{t_1,t_2,\dotsc,t_k}t_\ell|+|b_{t_1,t_2,\dotsc,t_k}t_\ell|=2j_\ell$ for each $1\leq\ell\leq k$, and
\item $\displaystyle\sum_{(t_1,t_2,\dotsc,t_k)\in[j_1]\times[j_2]\times\dotsb\times[j_k]}c_{t_1,t_2,\dotsc,t_k}=0$.
\end{itemize}
This is equivalent to the existence of a multivariate polynomial $\chi$ of $k$ variables with integer coefficients subject to the conditions in the statement of the theorem.

Conversely, if such a multivariate polynomial $\chi\in\Z[z_1,z_2,\dotsc,z_k]$ exists, denote the coefficient of $z_1^{t_1}z_2^{t_2}\dotsb z_k^{t_k}$ by $a_{t_1,t_2,\dotsc,t_k}$ if it is positive, and denote the absolute value of the coefficient by $b_{t_1,t_2,\dotsc,t_k}$ if it is negative. For each $(t_1,t_2,\dotsc,t_k)\in[j_1]\times[j_2]\times\dotsb\times[j_k]$, let $a_{t_1,t_2,\dotsc,t_k}$ and $b_{t_1,t_2,\dotsc,t_k}$ be the number of times that $q_1^{t_1}q_2^{t_2}\dotsc q_k^{t_k}$ appears in the multisets $X$ and $Y$ respectively. Furthermore, for each $1\leq\ell\leq k$, let $j_\ell'=j_\ell-\displaystyle\sum_{(t_1,t_2,\dotsc,t_k)\in[j_1]\times[j_2]\times\dotsb\times[j_k]}a_{t_1,t_2,\dotsc,t_k}t_\ell$, and insert one copy of $q_1^{j_1'}q_2^{j_2'}\dotsb q_k^{j_k'}$ in both $X$ and $Y$. From our constructions, it is apparent that $p=q_1^{j_1}q_2^{j_2}\dotsb q_k^{j_k}$ is product-admissible with $X$ and $Y$ being the two different multisets.
\end{proof}

If we restrict to $k=1$, then Theorem \ref{admissibleprod} implies that $p=q^j$ is product-admissible if and only if there exists a nonzero polynomial $\chi$ with integer coefficients such that
\begin{itemize}
\item $\chi(q)=0$,
\item $\chi'(1)=0$, where $\chi'$ is the derivative of $\chi$,
\item the sum of the absolute values of the coefficients in $\chi'$ is at most $2j$, and
\item $\chi(1)=0$.
\end{itemize}
In other words, there exists a nonzero polynomial $\psi$ with integer coefficients such that
$$\chi(z)=(z-q)(z-1)^2\psi(z),$$
and the sum of the absolute values of the coefficients in $\chi'$ is at most $2j$. This is a nice characterization, but we go one step further and prove the following theorem.

\begin{theorem}\label{j>=2q+4}
Let $q$ be a prime and let $j\in\N$. Then $p=q^j$ is product-admissible if and only if $j\geq2q+4$.
\end{theorem}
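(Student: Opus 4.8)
The plan is to work entirely with the polynomial reformulation established immediately above: $p=q^j$ is product-admissible if and only if there is a nonzero $\psi\in\Z[z]$ such that $\chi(z)=(z-q)(z-1)^2\psi(z)$ satisfies $\sigma(\chi)\le 2j$, where $\sigma(\chi)$ denotes the sum of the absolute values of the coefficients of $\chi'$. The first step is a monotonicity remark: since $j$ enters only through the inequality $\sigma(\chi)\le 2j$, if $q^j$ is product-admissible then so is $q^{j'}$ for every $j'\ge j$, using the same $\chi$. Thus the set of admissible exponents is an up-set, and the theorem reduces to computing $m^{*}=\min_{\psi\neq 0}\sigma(\chi)$ and checking $m^{*}=4q+8$: admissibility then holds exactly when $2j\ge 4q+8$, i.e. $j\ge 2q+4$. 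Because $\sigma(\chi)$ is visibly even, no rounding issue arises at the threshold.

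For the upper bound $m^{*}\le 4q+8$, which yields the ``if'' direction, I would take $\psi=1$, so that $\chi(z)=(z-q)(z-1)^2$ and $\chi'(z)=3z^2-(2q+4)z+(2q+1)$; hence $\sigma(\chi)=3+(2q+4)+(2q+1)=4q+8$, and this single $\chi$ certifies admissibility for every $j\ge 2q+4$.

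All the difficulty is in the matching lower bound $\sigma(\chi)\ge 4q+8$ for every nonzero $\psi$, i.e. the ``only if'' direction. Writing $\chi(z)=\sum_t c_t z^t$, the hypotheses become $\sum_t c_t=0$, $\sum_t tc_t=0$, and $\sum_t c_tq^t=0$ for a nonzero integer sequence $(c_t)$, and the goal is $\sum_t t|c_t|\ge 4q+8$. Splitting into the positive and negative coefficients on disjoint index sets $P$ and $N$, the first two relations give $\sum_{t\in P}c_t=\sum_{t\in N}|c_t|$ and $\sum_{t\in P}tc_t=\sum_{t\in N}t|c_t|=:B$, so that $\sum_t t|c_t|=2B$ and the target is $B\ge 2q+4$. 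A direct check shows that any solution supported on $\{0,1,2\}$ must be trivial (the value relation collapses to $c_2(q-1)^2=0$), so the support always reaches an exponent at least $3$. To produce the bound I would feed in all three constraints at once through the auxiliary identity $\sum_t c_t\,\phi(t)=0$, where $\phi(t)=q^t-1-(q-1)t$ is the linear combination of the three constraint functions that is nonnegative on $\Z_{\ge 0}$ and vanishes exactly at $t\in\{0,1\}$; this convexity input, balanced against the count and first-moment equalities, should force $B\ge 2q+4$, with equality pinning down the canonical configuration $c_0=-q$, $c_1=2q+1$, $c_2=-(q+2)$, $c_3=1$.

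I expect the lower bound to be the genuine obstacle, for two reasons. First, the three constraints are homogeneous and linear, so the real relaxation of ``minimize $\sum_t t|c_t|$'' is a cone with infimum $0$; consequently no evaluation estimate of the form $\sigma(\chi)\ge|\chi'(z_0)|$ for $|z_0|=1$, and no linear-programming duality bound, can yield a positive constant, and the value $4q+8$ is forced purely by integrality. Second, the three conditions are tightly coupled: a large value $\sum_t c_tq^t$ can be manufactured cheaply at the exponent $t=0$, but only by inflating the count $\sum_t c_t$ and hence the balancing first moment, so the estimate must track all three quantities simultaneously rather than any one in isolation. I anticipate that the cleanest route is an exchange argument rewriting an arbitrary minimizer into the canonical configuration above without increasing $B$, together with a direct verification of the small cases $q=2$ and $q=3$, where the trade-off between value, count, and first moment is tightest.
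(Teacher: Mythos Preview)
Your reduction is correct and cleanly stated: the monotonicity in $j$, the upper bound via $\psi=1$, and the recasting of the lower bound as ``$\sum_t t|c_t|\ge 4q+8$ for every nonzero integer sequence $(c_t)$ with $\sum c_t=\sum tc_t=\sum c_tq^t=0$'' all match the paper's setup, and your plan to verify $q\in\{2,3\}$ separately is exactly what the paper does (by computer exhaustion).

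The gap is that the lower bound is not actually proved. The identity $\sum_t c_t\phi(t)=0$ with $\phi(t)=q^t-1-(q-1)t$ is a genuine and useful observation (it shows in particular that both the positive and negative parts of $c$ must meet $\{t\ge 2\}$, hence the support reaches $t\ge 3$), but it is only \emph{one} linear combination of the three constraints, and by itself it does not pin down the value $2q+4$: you still need to explain why no integer solution supported on, say, $\{0,1,2,3,4\}$ or $\{0,1,3,4\}$ can beat the canonical one. Your ``exchange argument rewriting an arbitrary minimizer into the canonical configuration'' is asserted, not supplied; you yourself flag that the real relaxation has infimum $0$, so any argument must exploit integrality in a concrete way, and the proposal does not say how. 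As written, the ``only if'' direction remains open.

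For comparison, the paper's route to the lower bound is entirely different and more mechanical. It writes $\chi'(z)=\sum_i d_iz^i$, inverts the linear map $(c_i)\mapsto(d_i)$ explicitly (the inverse has entries $Q_\iota=\sum_{i=0}^\iota(\iota+1-i)q^i$), and reads off from the penultimate row an expression for $-d_1$ as a nonnegative combination of the $d_i$ with increasing weights $Q_\iota/(\iota+2)$. Assuming a counterexample with $\sum|d_i|\le 4q+6$, the leading coefficient forces $d_{m+2}\ge m+3$, and a direct estimate then gives $-d_1>2q+4$, a contradiction; the cases $m=\deg\psi\in\{1,2,3\}$ and $m\ge4$ are handled by short explicit computations. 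If you want to pursue your convexity/exchange idea instead, you will need to produce an argument of comparable specificity---for instance, by classifying minimal-support solutions or by exhibiting an explicit dual certificate that respects integrality.
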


\begin{proof}
If $\psi$ is a constant polynomial such that $\psi(z)=1$, then $\chi(z)=z^3-(q+2)z^2+(2q+1)z-q$. This implies the two multisets can be
\begin{center}
$\{q^3,\underset{2q+1\text{ copies}}{\underbrace{q,q,\dotsc,q}}\}$ and $\{\underset{q+2\text{ copies}}{\underbrace{q^2,q^2,\dotsc,q^2}},\underset{q\text{ copies}}{\underbrace{1,1,\dotsc,1}}\}$.
\end{center}
At this moment, $(s,p,n)=(q^3+2q^2+q,q^{2q+4},2q+2)$. For all $j=2q+4+j'$ for some $j'\in\N$, the two multisets can be
\begin{center}
$\{q^3,\underset{2q+1\text{ copies}}{\underbrace{q,q,\dotsc,q}},q^{j'}\}$ and $\{\underset{q+2\text{ copies}}{\underbrace{q^2,q^2,\dotsc,q^2}},\underset{q\text{ copies}}{\underbrace{1,1,\dotsc,1}},q^{j'}\}$.
\end{center}
This implies the ``if" direction of this theorem.

By computer exhaustion, we check that $p=2^j$ is not product-admissible if $1\leq j\leq7$, and $p=3^j$ is not product-admissible if $1\leq j\leq 9$, which implies the ``only if" direction for $q=2$ or $3$. As for primes $q\geq5$, we proceed as follows.

Let $m$ be the degree of $\psi$, and let $\psi(z)=c_mz^m+c_{m-1}z^{m-1}+\dotsb+c_1z+c_0\in\Z[z]$. Without loss of generality, assume that $c_m>0$. Let $\chi'(z)=\big((z-q)(z-1)^2\psi(z)\big)'=d_{m+2}z^{m+2}+d_{m+1}z^{m+1}+d_mz^m+\dotsb+d_1z+d_0\in\Z[z]$. From our constructions, for a fixed polynomial $\psi$, the lowest possible value of $j$ such that $p=q^j$ is product-admissible is given by
$$\sum_{\substack{0\leq i\leq m+2\\\text{and }d_i>0}}d_i=\sum_{\substack{0\leq i\leq m+2\\\text{and }d_i<0}}-d_i=\frac{1}{2}\sum_{i=0}^{m+2}|d_i|.$$
If $m=0$, then it is clear that the lowest possible value of $j$ is $2q+4$, attained when $c_0=1$. Consider $m>0$. Assume the contrary that for some prime $q$, there exists $j<2q+4$ such that $p=q^j$ is product-admissible. In other words,
\begin{equation}\label{boundsdi}
\sum_{\substack{0\leq i\leq m+2\\\text{and }d_i>0}}d_i=\sum_{\substack{0\leq i\leq m+2\\\text{and }d_i<0}}-d_i\leq2q+3,
\end{equation}
and in particular, $|d_i|\leq2q+3$ for all $0\leq i\leq m+2$.

Since both polynomial multiplication and differentiation are linear operators, we can describe the relationship between $c_i$ and $d_i$ with the following matrix multiplications:
{\footnotesize\begin{align*}
\begin{pmatrix}
d_{m+2}\\
d_{m+1}\\
d_{m}\\
\vdots\\
d_2\\
d_1\\
d_0
\end{pmatrix}=&\hspace{2pt}\begin{pmatrix}
m+3\\
&m+2\\
&&m+1\\
&&&\ddots\\
&&&&3\\
&&&&&2\\
&&&&&&1
\end{pmatrix}\cdot\\
&\hspace{2pt}\begin{pmatrix}
1\\
-(q+2)&1\\
2q+1&-(q+2)&1\\
-q&2q+1&-(q+2)&1\\
&-q&2q+1&-(q+2)&1\\
&&\ddots&\ddots&\ddots&\ddots\\
&&&-q&2q+1&-(q+2)&1\\
&&&&-q&2q+1&-(q+2)&1
\end{pmatrix}\begin{pmatrix}
c_m\\
c_{m-1}\\
c_{m-2}\\
\vdots\\
c_1\\
c_0\\
0\\
0
\end{pmatrix}.
\end{align*}}
Inverting the matrices to the other side, we have
\begin{align*}
\begin{pmatrix}
c_m\\
c_{m-1}\\
c_{m-2}\\
\vdots\\
c_1\\
c_0\\
0\\
0
\end{pmatrix}=&\hspace{2pt}\begin{pmatrix}
Q_0\\
Q_1&Q_0\\
Q_2&Q_1&Q_0\\
Q_3&Q_2&Q_1&Q_0\\
Q_4&Q_3&Q_2&Q_1&Q_0\\
\vdots&\ddots&\ddots&\ddots&\ddots&\ddots\\
Q_{m+1}&\ddots&Q_4&Q_3&Q_2&Q_1&Q_0\\
Q_{m+2}&Q_{m+1}&\dotsb&Q_4&Q_3&Q_2&Q_1&Q_0
\end{pmatrix}\cdot\\
&\hspace{2pt}\begin{pmatrix}
\frac{1}{m+3}\\
&\frac{1}{m+2}\\
&&\frac{1}{m+1}\\
&&&\ddots\\
&&&&\frac{1}{3}\\
&&&&&\frac{1}{2}\\
&&&&&&1
\end{pmatrix}\begin{pmatrix}
d_{m+2}\\
d_{m+1}\\
d_{m}\\
\vdots\\
d_2\\
d_1\\
d_0
\end{pmatrix},
\end{align*}
where $Q_\iota=\sum_{i=0}^\iota(\iota+1-i)q^i$ for all $\iota=0,1,2,\dotsc,m+2$.

From the second last row of the matrix multiplication, we have
$$0=\sum_{\iota=0}^{m+1}\frac{Q_\iota}{\iota+2}d_{\iota+1},$$
which implies
\begin{equation}\label{-d1}
-d_1=2\sum_{\iota=1}^{m+1}\frac{Q_\iota}{\iota+2}d_{\iota+1}.
\end{equation}
Note that for all $0\leq\iota\leq m+1$,
\begin{align*}
\frac{Q_\iota}{\iota+2}-\frac{Q_{\iota-1}}{\iota+1}&=\frac{1}{\iota+2}\left(\sum_{i=0}^\iota(\iota+1-i)q^i-\left(1+\frac{1}{\iota+1}\right)\sum_{i=0}^{\iota-1}(\iota-i)q^i\right)\\
&=\frac{1}{\iota+2}\sum_{i=0}^\iota\left(1-\frac{\iota-i}{\iota+1}\right)q^i>0.
\end{align*}
Hence, $\frac{Q_\iota}{\iota+2}$ decreases with $\iota$. From the first row of the matrix multiplication, we note that $d_{m+2}=c_m(m+3)\geq m+3$. Combining with inequality \eqref{boundsdi}, equation \eqref{-d1} becomes
\begin{align*}
-d_1&\geq2\left(\frac{Q_{m+1}}{m+3}(m+3)+\frac{Q_m}{m+2}\big(-(2q+3)\big)\right)\\
&=2\left(\sum_{i=0}^{m+1}(m+2-i)q^i-\frac{1}{m+2}\left(2\sum_{i=0}^m(m+1-i)q^{i+1}+3\sum_{i=0}^m(m+1-i)q^i\right)\right)\\
&=2\left(\sum_{i=0}^{m+1}(m+2-i)q^i-\frac{1}{m+2}\left(2\sum_{i=1}^{m+1}(m+2-i)q^i+3\sum_{i=0}^{m+1}(m+1-i)q^i\right)\right)\\
&=2\left(\sum_{i=1}^{m+1}\left(m-3-i+\frac{3+5i}{m+2}\right)q^i+\frac{m^2+m+1}{m+2}\right).
\end{align*}
It suffices to show that $-d_1>2q+4$, since this will contradict with inequality \eqref{boundsdi}.

If $m=1$, then
$$-d_1\geq\frac{2}{3}q^2-\frac{2}{3}q+2,$$
which is greater than $2q+4$ since $q\geq5$. If $m=2$, then
$$-d_1\geq q^3+\frac{1}{2}q^2+\frac{7}{2}>2q+\frac{1}{2}+\frac{7}{2}=2q+4.$$
If $m=3$, then
$$-d_1\geq\frac{6}{5}q^4+\frac{6}{5}q^3+\frac{6}{5}q^2+\frac{6}{5}q+\frac{26}{5}>\frac{24}{5}q+\frac{26}{5}>2q+4.$$
Finally, if $m\geq4$, then
\begin{align*}
&\hspace{2pt}-d_1-(2q+4)\\
\geq&\hspace{2pt}2\left(\frac{mq^{m+1}+(2m-3)q^m+(3m-6)q^{m-1}+(4m-9)q^{m-2}}{m+2}\right.\\
&\hspace{2pt}\left.+\sum_{i=2}^{m-3}\left(m-3-i+\frac{3+5i}{m+2}\right)q^i+\left(m-4+\frac{8}{m+2}-1\right)q+\frac{m^2+m+1}{m+2}-2\right).
\end{align*}
All coefficients of $q^i$ and the constant term are positive, meaning $-d_1>2q+4$.
\end{proof}

\begin{corollary}
Let $q$ be a prime and let $u\in\N$. Then $p=q^{2q+4}u$ is product-admissible.
\end{corollary}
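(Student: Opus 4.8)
The plan is to reuse verbatim the pair of multisets produced in the proof of Theorem~\ref{j>=2q+4} for the exponent $j=2q+4$, and then append a single extra part equal to $u$ to each; this is precisely the $n'=1$, $s'=u$ instance of the construction in Lemma~\ref{inductionstep}. Concretely, that proof exhibits the two distinct multisets
$$X=\{q^3,\underset{2q+1\text{ copies}}{\underbrace{q,q,\dotsc,q}}\}\quad\text{and}\quad Y=\{\underset{q+2\text{ copies}}{\underbrace{q^2,q^2,\dotsc,q^2}},\underset{q\text{ copies}}{\underbrace{1,1,\dotsc,1}}\},$$
each of which has sum $q^3+2q^2+q$, product $q^{2q+4}$, and size $2q+2$.

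First I would set $X'=X\cup\{u\}$ and $Y'=Y\cup\{u\}$ (that is, insert one extra part equal to $u$ into each multiset). Since $u\geq1$ is a positive integer, both $X'$ and $Y'$ are multisets of positive integers; each now has sum $q^3+2q^2+q+u$, product $q^{2q+4}\cdot u=p$, and size $2q+3$. Hence $X'$ and $Y'$ are two partitions realizing the triple $(q^3+2q^2+q+u,\,p,\,2q+3)$, a witness with product exactly $p$.

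The only remaining point is to confirm that $X'\neq Y'$, so that the triple is genuinely witnessed by two \emph{different} partitions. This is immediate, since inserting the same element $u$ into the two distinct multisets $X$ and $Y$ cannot make them coincide. Therefore $p=q^{2q+4}u$ is product-admissible. I expect no real obstacle here: the corollary is essentially a one-line consequence of the explicit witness in Theorem~\ref{j>=2q+4} together with the append-a-part idea of Lemma~\ref{inductionstep}. The only feature worth flagging is that the argument is completely uniform in $u$, requiring no coprimality or squarefreeness hypothesis, because a single added part absorbs the arbitrary factor $u$ while leaving the sum, product, and size identical across the two partitions.
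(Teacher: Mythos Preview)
Your proof is correct and matches the paper's approach exactly: append the single part $u$ to each of the two witness multisets from Theorem~\ref{j>=2q+4}, yielding $T(X')=T(Y')=(q^3+2q^2+q+u,\,q^{2q+4}u,\,2q+3)$. The paper's proof is the same one-liner, just without the explicit check that $X'\neq Y'$.
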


\begin{proof}
This is by noticing that $T\{q^3,\underset{2q+1\text{ copies}}{\underbrace{q,q,\dotsc,q}},u\}=T\{\underset{q+2\text{ copies}}{\underbrace{q^2,q^2,\dotsc,q^2}},\underset{q\text{ copies}}{\underbrace{1,1,\dotsc,1}},u\}=(q^3+2q^2+q+u,q^{2q+4}u,2q+3)$.
\end{proof}

\section{At least $r$ partitions with the same product}\label{secmultipleparts}

In Sections \ref{secf(s)} and \ref{secadmisprod}, we focused on finding at least two different multisets $\{x_1,x_2,\dotsc,x_n\}$ and $\{y_1,y_2,\dotsc,y_n\}$ such that $T\{x_1,x_2,\dotsc,x_n\}=T\{y_1,y_2,\dotsc,y_n\}=(s,p,n)$. In this section, we consider at least $r$ multisets that correspond to the same triple $(s,p,n)$.

For all integers $r\geq2$ and $n\geq3$, recall from the introduction that $s_r^*(n)$ is the smallest positive integer such that for all integers $s\geq s_r^*(n)$, there are at least $r$ different $n$-partitions of $s$, namely $X_i=\{x_{i1},x_{i2},\dotsc,x_{in}\}$ for $i=1,2,\dotsc,r$, satisfying
$$T(X_i)=(s,p,n)$$
for some $p\in\N$. As mentioned in Theorem \ref{kelly}, Kelly proved that $s_{n-1}^*(n)\in\N$ exists for all integers $n\geq3$. He also stated that $s_2^*(3)=19$, but $s_{n-1}^*(n)$ was unknown for $n\geq4$.

To find the values of $s_{n-1}^*(n)$, we first define $s_r^0(n)$ as the smallest positive integer $s$ such that there are at least $r$ different $n$-partitions of $s$, namely $X_i=\{x_{i1},x_{i2},\dotsc,x_{in}\}$ for $i=1,2,\dotsc,r$, satisfying $T(X_i)=(s,p,n)$ for some $p\in\N$.

\begin{theorem}\label{s^0tos^*}
For all integers $r\geq2$ and $n\geq3$, $s_r^*(n+1)\leq s_r^0(n)+1\leq s_r^*(n)+1$.
\end{theorem}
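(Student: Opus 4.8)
The plan is to prove the two inequalities separately, with the right-hand one being essentially definitional and the left-hand one resting on a simple partition-augmentation argument. For the right-hand inequality $s_r^0(n)+1\leq s_r^*(n)+1$ it suffices to show $s_r^0(n)\leq s_r^*(n)$. By definition of $s_r^*(n)$, the sum $s=s_r^*(n)$ itself admits at least $r$ distinct $n$-partitions sharing a common product. Hence $s_r^*(n)$ belongs to the set of sums over which $s_r^0(n)$ is taken to be the minimum, so $s_r^0(n)\leq s_r^*(n)$ is immediate. (As a byproduct, this shows $s_r^0(n)$ exists whenever $s_r^*(n)$ does.)

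For the left-hand inequality $s_r^*(n+1)\leq s_r^0(n)+1$, the key idea is an augmentation that turns an $n$-partition into an $(n+1)$-partition by appending a single part while preserving a common product, in the same spirit as Lemma \ref{inductionstep} but carried out simultaneously for all $r$ partitions. Concretely, I would fix $r$ distinct $n$-partitions $X_1,\dotsc,X_r$ of $s_r^0(n)$ with common product $p$, which exist by the definition of $s_r^0(n)$. Given any integer $s\geq s_r^0(n)+1$, set $w=s-s_r^0(n)\geq1$ and adjoin the single part $w$ to each $X_i$. Each resulting multiset $X_i\cup\{w\}$ has size $n+1$, sum $s_r^0(n)+w=s$, and product $pw$, so $T(X_i\cup\{w\})=(s,pw,n+1)$ for every $i$; thus all $r$ augmented partitions share the sum $s$ and the product $pw$. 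I would then check that they remain pairwise distinct: the map $A\mapsto A\cup\{w\}$ is injective on multisets (removing one copy of $w$ recovers $A$), so $X_i\neq X_{i'}$ forces $X_i\cup\{w\}\neq X_{i'}\cup\{w\}$. Consequently every $s\geq s_r^0(n)+1$ admits at least $r$ distinct equal-product $(n+1)$-partitions, which is exactly the defining property of $s_r^*(n+1)$; hence $s_r^*(n+1)\leq s_r^0(n)+1$ and $s_r^*(n+1)$ exists.

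The argument is short, and the only points requiring care are minor: the off-by-one in the sum — we need $w\geq1$, which is precisely why the threshold is $s_r^0(n)+1$ rather than $s_r^0(n)$ — and the verification that appending a common part preserves distinctness. I do not expect a genuine obstacle here; the substance of the statement is the clean telescoping it enables, since together with the existence of $s_r^*(n)$ for a single $n$ it propagates existence to all larger $n$ and yields $s_r^*(n+1)\leq s_r^*(n)+1$.
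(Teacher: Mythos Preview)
Your proof is correct and follows essentially the same approach as the paper: append a common extra part $w=s-s_r^0(n)$ to each of the $r$ partitions to obtain $r$ distinct $(n+1)$-partitions with common sum $s$ and common product $pw$, and observe that $s_r^0(n)\leq s_r^*(n)$ directly from the definitions. Your added remarks on the injectivity of the augmentation map and the off-by-one are sound refinements that the paper leaves implicit.
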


\begin{proof}
Let $X_i=\{x_{i1},x_{i2},\dotsc,x_{in}\}$ for $i=1,2,\dotsc,r$ be $r$ different partitions of $s=s_r^0(n)$ satisfying $T(X_i)=(s,p,n)$. For any $s'\geq s_r^0(n)+1$, let $u=s'-s_r^0(n)$. Then $X'_i=\{x_{i1},x_{i2},\dotsc,x_{in},u\}$ for $i=1,2,\dotsc,r$ are $r$ different partitions of $s'$ satisfying $T(X_i)=(s',pu,n+1)$. Therefore, $s_r^*(n+1)\leq s_r^0(n)+1$. The second inequality follows from the obvious fact that $s_r^0(n)\leq s_r^*(n)$.
\end{proof}

Theorem \ref{s^0tos^*} can be used as an algorithm to determine $s_r^*(n)$ by first computing $s_r^0(n-1)$, followed by checking all values $s\leq s_r^0(n-1)+1$. To illustrate this process, we have computed $s_n^0(n)$ for $3\leq n\leq20$, listed in the following table. These results can be verified computationally.
$$\small\begin{tabular}{|c|c|c|c|c|c|c|c|c|c|c|c|c|c|c|c|c|c|c|}
\hline
$n$&3&4&5&6&7&8&9&10&11&12&13&14&15&16&17&18&19&20\\
\hline
$s_n^0(n)$&39&24&25&26&28&30&31&34&35&37&39&41&43&44&46&48&49&51\\
\hline
\end{tabular}$$
To determine $s_{n-1}^*(n)$ for $3\leq n\leq21$, we only need to check all values $s\leq s_{n-1}^0(n-1)+1$. A longer list of $s_{n-1}^*(n)$ values can be found on the On-Line Encyclopedia of Integer Sequences as A317254 \cite{oeis}.
$$\small\begin{tabular}{|c|c|c|c|c|c|c|c|c|c|c|c|c|c|c|c|c|c|c|c|}
\hline
$n$&3&4&5&6&7&8&9&10&11&12&13&14&15&16&17&18&19&20&21\\
\hline
$s_{n-1}^*(n)$&19&23&23&26&27&29&31&32&35&36&38&40&42&44&45&47&49&50&52\\
\hline
\end{tabular}$$

\section{Concluding remarks and conjectures}\label{conclusion}

Based on computational data for $6\leq n\leq60$, it can be observed that $s_{n-1}^*(n)=s_{n-1}^0(n-1)+1$, which motivates the following conjecture.

\begin{conjecture}
For all integers $n\geq6$, $s_{n-1}^*(n)=s_{n-1}^0(n-1)+1$.
\end{conjecture}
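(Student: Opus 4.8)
The plan is to prove the two inequalities separately and to observe that one of them is already in hand. Taking $r=n-1$ in Theorem \ref{s^0tos^*}, with the role of the part-count $n$ there played by $n-1$, gives at once
\[
s_{n-1}^*(n)\le s_{n-1}^0(n-1)+1 .
\]
Thus every sum $s\ge s_{n-1}^0(n-1)+1$ already carries at least $n-1$ equal-product $n$-partitions, and since $s_{n-1}^*(n)$ is by definition the least threshold with this property, the conjecture is equivalent to showing that $s_0:=s_{n-1}^0(n-1)$ itself does \emph{not} have it. Explicitly:

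\medskip
\noindent\textbf{Claim (A).} \emph{The sum $s_0$ admits fewer than $n-1$ distinct $n$-partitions with a common product.}
\medskip

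\noindent Given the upper bound, $s_{n-1}^*(n)=s_0+1$ holds if and only if Claim (A) holds, because the only candidate ``bad'' sum that is $\ge s_0$ is $s_0$ itself.

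Next I would set up the reduction that does the real work. For a sum $s$ and a product $p$, let $N_m(s,p)$ denote the number of $m$-partitions of $s$ with product $p$. Deleting one part equal to $1$ from an $n$-partition of $s$ produces an $(n-1)$-partition of $s-1$ with the same product, and adjoining a $1$ inverts this; hence deletion is a bijection between the $n$-partitions of $s$ with product $p$ that contain a $1$ and all $(n-1)$-partitions of $s-1$ with product $p$. Separating off the partitions whose parts are all at least $2$ gives the exact identity
\[
N_n(s,p)=N_{n-1}(s-1,p)+N_n^{\ge2}(s,p),
\]
where $N_n^{\ge2}(s,p)$ counts the $n$-partitions of $s$ with product $p$ and every part $\ge2$. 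Evaluating at $s=s_0$ and using that $s_0-1<s_{n-1}^0(n-1)$ is too small to support $n-1$ equal-product $(n-1)$-partitions, we obtain $N_{n-1}(s_0-1,p)\le n-2$ for every $p$, so Claim (A) reduces to the uniform bound
\[
N_{n-1}(s_0-1,p)+N_n^{\ge2}(s_0,p)\le n-2\qquad\text{for all }p.
\]

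The first term is already $\le n-2$, so the dangerous products are exactly those realized by the full count $n-2$ of $(n-1)$-partitions of $s_0-1$, and for those one must show $N_n^{\ge2}(s_0,p)=0$. The obstruction here is structural and, I expect, the crux of the whole problem: deleting a $1$ is the \emph{only} product-preserving operation that lowers both the part-count and the sum by one, since merging two parts $a,b$ into a single part $c$ with $a+b=c+1$ and $ab=c$ forces $(a-1)(b-1)=0$. The all-parts-$\ge2$ family therefore has no product-preserving reduction to the $(n-1)$ problem, and it is not negligible—the tabulated values keep $s_0/n$ bounded away from $2$, so a constant fraction of the parts must exceed $2$—so it has to be controlled by an independent argument.

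The route I would pursue is to classify, at the onset $s_0$, the ``maximally popular'' products, namely those attaining $N_{n-1}(s_0-1,p)=n-2$, together with the extremal $(n-1)$-partition families that realize them, and then to verify directly that no such product is the product of an all-parts-$\ge2$ $n$-partition of $s_0$; a uniform version of the same incompatibility would yield the displayed trade-off for every $p$. The hypothesis $n\ge6$ is expected to enter precisely through this classification, the small cases being genuinely exceptional—most dramatically $n=3$, where the wizard-puzzle value $s_3^0(3)=39$ is anomalously large—so the argument would be carried out for $n\ge6$ with the finitely many residual small cases dispatched by the computations already tabulated. The hard part, and the reason this remains a conjecture, is establishing this incompatibility between maximally popular onset products and all-parts-$\ge2$ partitions uniformly in $n$.
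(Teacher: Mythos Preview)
The paper states this as a \emph{conjecture}, not a theorem: it offers no proof, only the remark that computational data for $6\le n\le 60$ supports the equality. So there is no ``paper's own proof'' to compare against.

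Your proposal is not a proof either, and you say so explicitly in the final sentence. What you have written is a correct and well-organized reduction. The upper bound from Theorem~\ref{s^0tos^*} is applied correctly, your Claim~(A) is indeed equivalent to the remaining inequality $s_{n-1}^*(n)\ge s_{n-1}^0(n-1)+1$, the delete-a-$1$ bijection and the identity $N_n(s,p)=N_{n-1}(s-1,p)+N_n^{\ge2}(s,p)$ are right, and the use of the minimality of $s_0=s_{n-1}^0(n-1)$ to get $N_{n-1}(s_0-1,p)\le n-2$ is legitimate. The parenthetical observation that merging two parts $a,b$ into $c$ with $a+b=c+1$ and $ab=c$ forces $(a-1)(b-1)=0$ is also correct.

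The genuine gap is exactly where you locate it: controlling $N_n^{\ge2}(s_0,p)$, specifically ruling out that a product $p$ attaining the maximal count $N_{n-1}(s_0-1,p)=n-2$ is also realized by some all-parts-$\ge2$ $n$-partition of $s_0$ (and, more generally, the trade-off inequality you display). You sketch a plausible strategy---classify the extremal products at the onset and check incompatibility---but give no mechanism for carrying it out uniformly in $n$. Since the paper itself leaves this open, your write-up is best read as a clean reformulation of what would have to be shown, not as a proof; labeling it a ``proof proposal'' is accurate only in the sense of a research plan.
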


Computational data also leads us to the following conjectures. Note that in each of the following statements, $s_r^*(n)\leq s_r^*(n-1)+1$ is given by Theorem \ref{s^0tos^*}.

\begin{conjecture}\ 
\begin{enumerate}[$(a)$]
\item For all integers $n\geq9$, $s_{n-2}^*(n)=s_{n-2}^*(n-1)+1$.
\item For all integers $n\geq7$, $s_{n-1}^*(n)=s_{n-1}^*(n-1)+1$.
\item For all integers $n\geq10$, $s_n^*(n)=s_n^*(n-1)+1$.
\end{enumerate}
\end{conjecture}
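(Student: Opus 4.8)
The plan is to prove all three parts in parallel, since each is the single assertion $s_r^*(n)=s_r^*(n-1)+1$ with $r=n-c$ held fixed ($c\in\{0,1,2\}$) while the number of parts drops from $n$ to $n-1$. Theorem~\ref{s^0tos^*} already supplies the upper bound $s_r^*(n)\le s_r^0(n-1)+1\le s_r^*(n-1)+1$, so the entire content is the matching lower bound $s_r^*(n)\ge s_r^*(n-1)+1$. I would dispatch the small values of $n$ near each stated bound directly from the computations of Section~\ref{secmultipleparts} and the extended data in \cite{oeis}, and establish a uniform argument valid for all sufficiently large $n$, choosing the cutoff so the two ranges together cover the whole statement.

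Next I would recast the lower bound as a descent statement. Write $P_m(s)$ for the property that there exist at least $r$ pairwise distinct $m$-partitions of $s$ sharing one common product. Since $P_n(s')$ holds for every $s'\ge s_r^*(n)$, the inequality $s_r^*(n)\ge s_r^*(n-1)+1$ is equivalent to $s_r^*(n-1)\le s_r^*(n)-1$, and this follows once one proves the implication
\[
P_n(s+1)\ \Longrightarrow\ P_{n-1}(s)\qquad\text{for every }s\ge s_r^*(n)-1.
\]
The mechanism I would use is descent by deleting a part equal to $1$: if $n$-partitions $X_1,\dotsc,X_r$ of $s+1$ share a product $p$ and each contains a part equal to $1$, then $X_1\setminus\{1\},\dotsc,X_r\setminus\{1\}$ are $(n-1)$-partitions of $s$, still of common product $p$, and still pairwise distinct, because deleting the same element $1$ from distinct multisets that each contain $1$ cannot produce a collision. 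Thus the product and distinctness conditions come for free, and the whole difficulty collapses to one structural claim: a family witnessing $P_n(s+1)$ near the threshold can be chosen with at least $r$ of its members containing a part equal to $1$.

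The hard part will be exactly this structural claim, and it is here that the regime $r\in\{n-2,n-1,n\}$ is essential. When $r$ is this close to $n$, a family of nearly $n$ distinct $n$-partitions forced to agree on both sum and product is very rigid: by the polynomial encoding in the proof of Theorem~\ref{admissibleprod}, the common product $p$ can carry only a bounded number of distinct prime factors, and the members must cluster so tightly that many parts are forced to be small, and in particular to equal $1$. I would try to make this quantitative by bounding the prime support of $p$ and the total excess $\sum_j(x_{ij}-1)$ of each member, and then showing that at the threshold sum a member with all parts at least $2$ cannot coexist with $r-1$ further partitions of the same sum and product. The main obstacle is that $1$-free partitions genuinely exist once $s\ge 2n$, which holds throughout the threshold regime (the tables place $s_{n-1}^*(n)$ well above $2n$, e.g.\ $52>42$ at $n=21$); one therefore cannot simply assert a common $1$, but must instead prove that any such $1$-free member can be discarded or exchanged while still leaving $r$ admissible descendants, and making this count uniform in $n$ is the crux.

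Finally, I would cross-check the descent against the companion relation $s_{n-1}^*(n)=s_{n-1}^0(n-1)+1$ proposed earlier in this section. Proving that the first occurrence $s_r^0(n-1)$ and the threshold $s_r^*(n-1)$ coincide in these regimes---equivalently, that $P_{n-1}(s)$ is monotone in $s$ from its first occurrence---would turn the outer inequalities $s_r^*(n)\le s_r^0(n-1)+1\le s_r^*(n-1)+1$ into equalities and feed directly into the argument above. Since this monotonicity is a rigidity statement of exactly the same flavor as the $1$-presence claim, I expect a single structural result about equal-product families with $r$ near $n$ to settle both, and I would develop that result as the technical heart of the proof.
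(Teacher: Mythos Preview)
The statement you are attempting to prove is listed in the paper as a \emph{conjecture}; the authors offer only computational evidence and no proof. There is therefore no paper proof to compare your proposal against.

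Your proposal is a reasonable strategy outline, but it is not a proof. You correctly observe that Theorem~\ref{s^0tos^*} supplies the upper bound $s_r^*(n)\le s_r^*(n-1)+1$, and that the entire content is the matching lower bound. Your descent mechanism---deleting a common part equal to $1$---is valid whenever it applies. But the whole argument rests on what you yourself label ``the hard part'': the claim that near the threshold a witnessing family can be chosen with at least $r$ members each containing a $1$. You do not establish this. You write that you ``would try to make this quantitative,'' you concede that $1$-free partitions ``genuinely exist'' in the relevant range $s\ge 2n$, and you conclude only that you ``expect a single structural result\dots\ to settle both.'' These are declarations of intent, not arguments. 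The rigidity heuristic you sketch (bounded prime support of $p$, clustering of parts) does not by itself force a $1$ into enough members, and the companion monotonicity claim $s_r^0(n-1)=s_r^*(n-1)$ that you propose to invoke is itself an unproved statement of the same strength as the conjecture. In short, you have correctly located the obstruction but not overcome it; the statement remains open.
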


\section{Acknowledgement}

This project is supported by the National Science Foundation (grant number: 1560019) through the Research Experiences for Undergraduates at Muhlenberg College in summer 2018.

\end{document}